\definecolor{Chocolat}{rgb}{0.36, 0.2, 0.09}
\definecolor{BleuTresFonce}{rgb}{0.215, 0.215, 0.36}
\definecolor{EgyptianBlue}{rgb}{0.06, 0.2, 0.65}
\newtheorem{theorem}{Theorem}[section]
\newtheorem{proposition}{Proposition}[section]
\theoremstyle{definition}
\newtheorem*{definition}{Definition}
\DeclareFontFamily{U}{wncy}{}
\DeclareFontShape{U}{wncy}{m}{n}{<->wncyr10}{}
\DeclareSymbolFont{mcy}{U}{wncy}{m}{n}
\DeclareMathSymbol{\Sha}{\mathord}{mcy}{"58}
\DeclareMathAlphabet{\pazocal}{OMS}{zplm}{m}{n}
\DeclareMathAlphabet{\mathbbold}{U}{bbold}{m}{n}
\def\k{\mathbbold{k}}
\DeclareMathOperator{\Der}{Der}
\DeclareMathOperator{\Hom}{Hom}
\DeclareMathOperator{\supp}{supp}
\DeclareMathOperator{\DGA}{\mathsf{DGA}}
\begin{document}

\title{Tangent complexes and the Diamond Lemma}
\author{Vladimir Dotsenko} 
\address{Institut de Recherche Math\'ematique Avanc\'ee, UMR 7501, Universit\'e de Strasbourg et CNRS, 7 rue Ren\'e-Descartes, 67000 Strasbourg, France}
\email{vdotsenko@unistra.fr}
\author{Pedro Tamaroff} 
\address{School of Mathematics, Trinity College, Dublin 2, Ireland}
\email{pedro@maths.tcd.ie}

\begin{abstract}
The celebrated Diamond Lemma of Bergman gives an effectively verifiable criterion of uniqueness of normal forms for term rewriting in associative algebras. We present a new way to interpret and prove this result from the viewpoint of homotopical algebra. Our main result states that every multiplicative free resolution of an algebra with monomial relations gives rise to its own Diamond Lemma, so that Bergman's 
condition of ``resolvable ambiguities'' becomes the first non-trivial component of the Maurer--Cartan equation in the corresponding tangent complex. The same approach works for many other algebraic structures, emphasizing the relevance of computing multiplicative free resolutions of algebras with monomial relations.    
\end{abstract}

\keywords{deformation theory, Diamond Lemma, Gr\"obner basis, free resolution, rewriting system, tangent complex}
\subjclass{}

\maketitle

\section*{Introduction}

\subsection*{Context of our work}
When studying algebras presented by generators and relations, the central general result is the statement known as the Diamond Lemma. Historically, one would say that it was already implicit in Shirshov's work \cite{MR0183753} who proved an analogous but technically more involved result, the Composition Lemma, in the case of Lie algebras. About a decade after that, this result was proved in the case of associative algebras by Bokut in \cite{MR0506423}, and independently by Bergman~\cite{Bergman}, who also proposed the name ``Diamond Lemma'' in order to emphasize the analogy with the classical result of Newman \cite{MR7372}. Paraphrasing the opening phrase of \cite{Bergman}, the main results of our paper are doubly trivial: Bergman's Diamond Lemma is, in his own words, trivial, and our main goal is to explain a new trivial proof of this trivial statement, as well as some other trivial statements, from the point of view of homotopical algebra. However, we believe that readers of this paper can benefit from it in a number of ways. For a reader whose intuition comes from homotopical algebra, our proof will hopefully feel like a conceptual explanation of useful but seemingly technical criteria of ``resolvable ambiguities'' for uniqueness of normal forms. For a reader with background in Gr\"obner bases or term rewriting, our proof will offer intuition behind both the Diamond Lemma and its optimisation, known as the Chain Criterion in the commutative case \cite{MR575678,10.1145/2631948.2631968,10.1145/1088261.1088267} and as the Triangle Lemma in the case of noncommutative associative algebras~\cite{MR3642294,Lat88,MR1360005}, as well as precise guidance on how to generalise those for other algebraic structures. Specifically, our work means that computing models of algebras with monomial relations explicitly helps both to state the relevant Diamond Lemmas and to optimise them. 

\subsection*{From Gr\"obner bases to deformation theory and back}
The idea of using Gr\"obner bases for computing homological invariants of associative algebras is well known. The seemingly earliest instance appears in the work of Priddy \cite{MR265437} who constructed resolutions for the ground field, viewed as the trivial module, for algebras presented by quadratic--linear relations; this was later generalised by Anick~\cite{MR846601} to arbitrary presentations.  Existence of such resolutions is not at all surprising for the following reason. Defining relations of an associative algebra $A$ form a Gr\"obner basis if the same monomials give a vector space basis for both algebras $A$ and $A_{\mathrm{mon}}$; the latter is the algebra with monomial relations given by the leading monomials of the relations of $A$ with respect to a suitable ordering. The trivial module for $A_{\mathrm{mon}}$ admits a combinatorially defined free resolution, implicit in the work \cite{MR551760} of Backelin, and explicitly determined by Green, Happel and Zacharia in \cite{MR769766}. Such resolution can be obtained as a contraction of the bar resolution; incorporating the lower terms of relations from a Gr\"obner basis can obtained from such a contraction by homological perturbation techniques \cite{MR3276839,MR1007895,MR1103672,MR1187288}. In fact, the one-sided module resolution of Anick can be generalised to a bimodule resolution, as established by Bardzell \cite{MR1874282,MR2694031}, leading to a computational method for determining the Hochschild (co)homology of an algebra, and thus for studying deformations of a given algebra. This construction works under a weaker assumption of a convergent rewriting system instead of a Gr\"obner basis, as shown by Chouhy and Solotar in \cite{MR3334140}.   

Our main wish, motivated by interest in generalising these methods to algebraic structures other than associative algebras, is to go in the opposite direction and re-discover effective criteria for reduction systems with unique normal forms from the deformation theory viewpoint. Since in those situations the multiplication table of an algebra $A$ is obtained from the simple combinatorial multiplication table of $A_{\mathrm{mon}}$ by incorporating appropriate lower terms, we naturally find ourselves in the framework of deformation theory. In fact, very recently the bimodule resolutions mentioned above have been used to study deformations of algebras with monomial relations by Barmeier and Wang in their work on deformation theory of quiver algebras~\cite{BarWang} and by Redondo and Rossi Bertone in~\cite{RedBer}, using the following idea. A free bimodule resolution \[A_{\mathrm{mon}}\otimes C_\bullet\otimes A_{\mathrm{mon}}\simeq A_{\mathrm{mon}}\] leads to the representative  $\Hom(C_\bullet,A_{\mathrm{mon}})$ of the deformation complex, and one can use a strong deformation retract relating it to Hochschild complex to compute explicitly its $L_\infty$-algebra structure using the homotopy transfer theorem. Studying deformations amounts to studying solutions to the Maurer--Cartan equation in that $L_\infty$-algebra, and one may obtain various results this way. However, if one adopts this viewpoint, the Diamond Lemma criterion of ``resolvable ambiguities'' cannot be recovered instantly: it involves a calculation in the \textsl{free} associative algebra which cannot be reproduced directly since the target space of the deformation complex is the monomial algebra $A_{\mathrm{mon}}$. The two deformation complexes are homotopy equivalent, so the necessary result can be proved in principle. However, recovering the classical Diamond Lemma will require some translation, in the spirit of Anick's slogan ``the perturbative construction of the second differential of the resolution is precisely the procedure of resolving ambiguities'', see \cite[Sec.~2]{MR846601}. We also refer the reader to a discussion of this phenomenon in the survey of Ufnarovski \cite[Sec.~3.8]{MR1360005} where one also finds the Triangle Lemma as a way to optimise the algorithm.

\subsection*{Deformations via tangent complexes}
It turns out that a natural way to remedy the situation is to move from homology to homotopy, and study deformations of algebras in terms of homotopical algebra. This means working with multiplicative free resolutions, that is resolutions which are free as algebras rather than as bimodules. In this case, the homotopy class of the deformation complex of an algebra can be represented by its tangent complex, that is, the differential graded Lie algebra of derivations of its free resolution. Historically, this approach to deformations first emerged in the case of commutative associative algebras. In this context, it makes sense to note that some of the pioneering works both in the theory of Gr\"obner bases and in the deformation theory appeared in the algebro-geometric context: the former in the solution of the resolution of singularities problem in characteristic zero by Hironaka \cite{MR0175898,MR0199184}, and the latter in Palamodov's work on deformation of analytic spaces \cite{MR0508121}, where the tangent complex approach is attributed to unpublished work of Tyurina (who tragically died in a kayaking accident at the age of 32). However, the standard bases of Hironaka were just one of many tools in a paper of more than two hundred pages, and the power of this method in algebraic geometry and commutative algebra became apparent only after work of Buchberger \cite{MR2202562} who highlighted their algorithmic nature. The tangent complex approach to deformation theory of algebras became widely known from the famous manuscript \cite{SchSta} by Schlessinger and Stasheff. We would like to also remark that a seemingly completely independent path to multiplicative resolutions which however stays away from deformation theory questions emerges in  higher-categorical rewriting theory. Original work of Squier \cite{MR1146597,MR920522} on homological finiteness conditions for monoids admitting a convergent presentation first received a higher-categorical flavour in works of Lafont \cite{MR1324032} and Citterio \cite{MR1897811}; corresponding multiplicative resolutions appear in work of Lafont and M\'etayer~\cite{MR2498787} and Guetta \cite{guetta2020homology} relying on theory of resolutions of categories by polygraphs developed by M\'etayer \cite{MR1988395}. This formalism was recently extended to the $\k$-linear context by Guiraud, Hoffbeck, and Malbos~\cite{MR4002273}, who in particular explained how to construct a polygraphic resolution of an associative algebra by techniques similar to the perturbative construction of $A_\infty$-structures~\cite{MR885535} and free resolutions of algebras~\cite{MR1109665}. 

\subsection*{Structure of the paper}
For us, it is crucial that working with free resolutions means that recovering Bergman's criterion of resolvable ambiguities stands a chance, at least in principle, since our deformation complexes of algebras with monomial relations have free associative algebras as target spaces. Our approach blends ideas and methods from several different areas, and we tried to offer enough detail when recalling the necessary background information. Our first homotopical re-formulation of uniqueness of normal forms is a general criterion in terms of perturbations of free resolutions (Theorem \ref{th:Perturb}). Using that result, we prove another general criterion in terms of approximate solutions to the Maurer--Cartan equation in the tangent complex (Theorem \ref{th:MCLift}). Using this latter result, we establish the main result of this paper (Theorem \ref{thm:Diamond}): every model extending the Shafarevich complex of the given relations gives rise to its own Diamond Lemma, so that the resolvable ambiguities condition of the Diamond Lemma becomes the first non-trivial component of the Maurer--Cartan equation in the tangent complex of the model. Applying that latter result to some specific combinatorial models, we recover the classical Diamond Lemma in the case of the inclusion--exclusion model of the first author and Khoroshkin \cite{MR3084563} and the Triangle Lemma in the case of the minimal model of the second author~\cite{Tam1}. We discuss some analogues and possible generalisations of our results for other types of algebras towards the end of the article. Already in the case of operads with monomial relations, explicit formulas for minimal models are not known in general, and only a much weaker version of the Triangle Lemma is available~\cite{MR3642294}; we hope that our work will attract due attention to this question.  

\subsection*{Precursors: Evgeny Solomonovich Golod and Victor Nikolaevich Latyshev}
To conclude the introduction, we would like to mention two crucial sources of inspiration for our work. The first of them is the work of Golod who discovered a proof of the Diamond Lemma \cite{Golod} using the non-commutative analogue of the Koszul complex, the Shafarevich complex associated to a system of elements in the free algebra originally introduced in \cite{MR0161852}. That complex is a differential graded algebra whose homotopy type depends on the presentation of the original algebra. Our homotopy invariant free resolutions are obtained from the Shafarevich complex at the cost of adding extra generators of higher degrees; we believe that the benefit of the resulting clarity outweighs the cost. Our second inspiration comes from the work of Latyshev who used normal forms to resolve some particular cases \cite{MR0142595,MR0156874} of the celebrated Specht problem on identities of associative algebras \cite{MR35274}, and advocated general importance of normal forms and standard bases \cite{MR1754671,MR2128915,MR2744977}. Both Golod and Latyshev passed away relatively recently (in July 2018 and April 2020, respectively). We would like to dedicate this work to their memory.

\subsection*{Acknowledgements. } The authors thank Leonid Arkadievich Bokut, Eric Hoffbeck, Anton Khoroshkin and Muriel Livernet for useful discussions. The first author also thanks Dmitri Piontkovski who introduced him to Shafarevich complexes many years ago, and Emil Sk\"oldberg for sharing some computations  of minimal models of monomial operads in low homological degrees. The idea of this article emerged during the workshop ``Homotopy meets homology'' at Hamilton Mathematics Institute (Trinity College Dublin) in May 2019, and was primarily guided by our wish to unveil homotopical algebra behind the work~\cite{RedBer} presented at that workshop by Mar\'ia Julia Redondo; we are grateful to  the Science Foundation Ireland and the Simons Foundation for allocating funding that made that workshop possible. Part of this work was done during the second author's visits to Strasbourg funded by IRMA (Institut de Recherche Math\'ematique Avanc\'ee), he is thankful for both the financial support of those visits and the excellent working conditions. 

\section{Conventions}

Unless otherwise indicated, all objects in this paper are defined over an arbitrary ground field $\k$. By a \emph{graded} vector space we mean a vector space $V$ of the form 
 \[
V=\bigoplus_{n\in\mathbb{Z}} V_n ,
 \]
where we write $|v|=n$ for $v\in V_n$, and refer to $n$ as the \emph{homological degree} of an element~$v$. The adjective `homological' means that these degrees create ``Koszul signs'' arising from exchanging factors in tensor products. Recall that one defines the tensor product of two graded vector spaces by the formula 
 \[
(V\otimes W)_n:=\bigoplus_{i+j=k}V_i\otimes W_j 
 \]
summarised in words by ``degrees add under tensor products'', and homological degrees enter the formula for the symmetry isomorphism 
 \[
\tau_{V,W}\colon V\otimes W\to W\otimes V
 \]
given by $\tau(v\otimes w)=(-1)^{|v||w|} w\otimes v$. (This, for a trained eye, creates signs in a lot of places; for example, if one applies the tensor product of linear maps to a tensor product of two vectors, the formula $(\phi\otimes\psi)(v\otimes w)=(-1)^{|\psi||v|} \phi(v)\otimes \psi(w)$ has to be used.) Occasionally, our graded vector spaces will have extra gradings which do not create any extra signs in formulas; in such cases, we shall use the word ``grade'' for such degrees.

\section{Term rewriting}\label{sec:Rewriting}

In this section, we give a short introduction to term rewriting in the linear context. Mathematically, all of this goes back to~\cite{Bergman}; terminologically, we choose to follow the recent literature on rewriting systems, see, for example, \cite{MR4002273} and references therein. 

\subsection{Rewriting systems}
Let us fix once and for all a finite set $X$; we shall denote by $\langle X\rangle$ the free monoid generated by $X$, so that the linear span $\k\langle X\rangle$ is the free associative $\k$-algebra generated by $X$. For an element $g\in \k\langle X\rangle$ we shall denote by $\supp(g)$ the set of all elements of $\langle X\rangle$ that appear in $g$ with a non-zero coefficient. 

\begin{definition}[Rewriting system]
A \emph{rewriting system} on $\k\langle X\rangle$ is a triple $(X,W,f)$, where $W\subset\langle X\rangle$, and $f$ is a function on $W$ with values in $\k\langle X\rangle$. 
\end{definition}

The right way to think of a rewriting system is as of a collection of rules that allow to rewrite each monomial $w\in W$ into the element $f(w)\in \k\langle X\rangle$. In this way, each rewriting system $\Sigma$ naturally gives rise to an associative algebra $A_\Sigma$ with relations $R=\{w-f(w) \colon w\in W\}$: 
 \[
A_\Sigma=\k\langle X\mid R \rangle .
 \]
Each rewriting rule of $\Sigma$ becomes a way to replace elements of $\k\langle X\rangle$ by other representatives of the same coset in $A_\Sigma$, as follows. 

\begin{definition}[Reduction]
Suppose that $w\in W$, and $a,b\in\langle X\rangle$. A \emph{basic reduction} associated to the triple $(a,w,b)$ is the $\k$-linear endomorphism $\rho$ of $\k\langle X\rangle$ such that for each $m\in\langle X\rangle$ we have
 \[
\rho(m)=
\begin{cases}
af(w)b, \ \ \text{if } m=awb,\\
\quad\ \  m \qquad \text{otherwise. }
\end{cases}
 \]
A finite sequence of basic reductions is called a \emph{reduction}; it defines a $\k$-linear endomorphism of $\k\langle X\rangle$ by composing, in the given order, the basic reductions appearing in it.
\end{definition}

Using reductions is intended to ``simplify'' representatives of cosets. This is formalised by the notion of a normal form.

\begin{definition}[Normal form]
An element $g\in \k\langle X\rangle$ is called \emph{irreducible} if $\rho(g)=g$ for every basic reduction $\rho$. A \emph{normal form} of an element $g\in\k\langle X\rangle$ is an irreducible element $\bar{g}$ such that the cosets of $g$ and $\bar{g}$ in $A_\Sigma$ are equal.
\end{definition}

In principle, it is possible that one can perform basic reductions infinitely many times. We shall only work with rewriting systems for which it does not happen. To formalise that, we give the following definition.

\begin{definition}[Terminating rewriting system]
A \emph{pseudo-reduction} is an infinite sequence 
 \[
\rho =(\rho_1, \rho_2, \rho_3,\ldots) 
 \]
of basic reductions. To every pseudo-reduction $\rho$ and every $g \in\k\langle X\rangle$ we can 
associate the sequence 
 \[ 
\rho(g) = (\rho_1(g),\rho_2\rho_1(g),\rho_3\rho_2\rho_1(g),\ldots) 
 \]
of elements in $\k\langle X\rangle$. We say that an element $g \in\k\langle X\rangle$ is \emph{reduction finite} if for every pseudo-reduction
$\rho$, the associated sequence $\rho(g)$ is eventually constant. A rewriting system $\Sigma$ is \emph{terminating} if all elements $g\in \k\langle X\rangle$ are reduction finite.  
\end{definition}

Note that if an element $g$ is reduction finite, there exists a reduction that sends this element to an irreducible one, for otherwise we could easily find a pseudo-reduction $\rho$ for which $\rho(g)$ is not eventually constant. Thus, for a terminating rewriting system, each element has at least one normal form. All rewriting systems considered in this paper are assumed to be terminating. We shall now recall how to recast termination in terms of partial orderings.

\begin{definition}[Compatible rewriting ordering]
For a given rewriting system $\Sigma$, we say that a partial order $\leqslant$ on $\langle X\rangle$ is a \emph{rewriting ordering compatible with $\Sigma$} if the following conditions hold:
\begin{enumerate}
\item \emph{multiplicativity}: if $m<m'$ for two elements $m,m'$, then $amb<am'b$ for any $a,b\in\langle X\rangle$;
\item \emph{well-ordering}: every non-empty set of monomials has a minimal element;
\item \emph{compatibility with $\Sigma$}: for all $w\in W$ and all $w'\in\supp(f(w))$, we have $w'<w$.
\end{enumerate}
\end{definition}

Among rewriting orderings compatible with $\Sigma$ (if they exist at all), there exists the weakest possible one obtained as follows. We first define $<_\Sigma$ as the smallest transitive binary relation that is multiplicative and compatible with~$\Sigma$. If $<_\Sigma$ is a well-ordering, the reflexive closure $\leqslant_\Sigma$ is a rewriting ordering compatible with~$\Sigma$, and each other rewriting ordering compatible with~$\Sigma$ obviously refines the ordering $\leqslant_\Sigma$. According to \cite{Bergman,MR3334140}, the rewriting system $\Sigma$ is terminating if and only if $<_\Sigma$ is a well-ordering.

For a terminating rewriting system, the associative algebra
 \[
A_{\Sigma_0}=\k\langle X\mid W \rangle 
 \]
obtained from the rewriting system $\Sigma_0=(W,0)$ and called the \emph{monomial algebra associated to $\Sigma$} is a relevant object of study. Indeed, an element $g\in \k\langle X\rangle$ is irreducible if and only if no monomial $m\in\supp(g)$ is divisible by elements of~$W$, so irreducible monomials form a basis of the algebra $A_{\Sigma_0}$. However, since the same element may have several different normal forms, the cosets of irreducible monomials might not be linearly independent in the algebra $A_\Sigma$.

\begin{definition}[Convergent rewriting system]
A (terminating) rewriting system is \emph{convergent} if each element $g \in\k\langle X\rangle$ has exactly one normal form. 
\end{definition}

Among the rewriting systems, the best known class is given by \emph{Gr\"obner bases}: those are rewriting systems where there is a total order $\leqslant$ on $\langle X\rangle$ satisfying the above three conditions. Let us recall an example \cite{MR3642294,MR4002273} showing that sometimes using a partial order is really advantageous. For that, we consider the algebra $A=\k\langle x,y,z\mid x^3+y^3+z^3-xyz\rangle$. Note that for a total multiplicative order, we always have $xyz<\max(x^3,y^3,z^3)$, and one can check that for such an order the corresponding rewriting system is not convergent, so one needs to extend it by further rewriting rules. However, one can consider the rewriting system $\Sigma=(X,W,f)$ with $X=\{x,y,z\}$, $W=\{xyz\}$ and $f(xyz)=x^3+y^3+z^3$ which does not arise from a total multiplicative order. Let us explain why this rewriting system is terminating. For that, we define, for each monomial $m$, $\Phi(m)=3A(m)+B(m)$, where $A(m)$ is the number of divisors of $m$ equal to $xyz$ and $B(m)$ is the number of divisors of $m$ equal to $y$. Then one can check that any application of our rewriting rule replaces a monomial by a linear combination of terms for which the parameter $\Phi$ is strictly smaller, which guarantees the termination. Later we shall see that this rewriting system is convergent, showing that in this case there is an advantage over the theory of Gr\"obner bases.

Bergman's Diamond Lemma \cite{Bergman} establishes a criterion of convergence in terms of the so called ``resolvable ambiguities''. If this were a paper on rewriting systems, at this point we would formulate and prove that result. However, our goal is to explain how such criterion can be found, so that one can determine its analogues for other algebraic structures. It is easy to see that a rewriting system $\Sigma$ is convergent if and only if the corresponding irreducible monomials are linearly independent. In other words, $\Sigma$ is convergent if and only if the linear map $A_{{\Sigma_0}}\to A_\Sigma$ sending each element of the standard monomial basis of $A_{{\Sigma_0}}$ to its coset in $A_\Sigma$ is an isomorphism of vector spaces. This suggests that convergent rewriting systems may be studied from the viewpoint of deformation theory of associative algebras. In general, the notion of a \emph{deformation} implies that we \emph{deform} the structure by adding lower terms with respect to a suitable filtration. For example, for the classical deformation theory set-up \cite{MR2483955} defined over an Artinian local ring containing $\k$, the filtration may be defined by powers of the maximal ideal $\mathfrak{m}$ of that ring. In our case, the filtration is defined out of the partial ordering $<_\Sigma$, or, better to say, out of any order that extends it to a total well-ordering. The multiplication table in the algebra~$A_{{\Sigma_0}}$ is very simple: the product of two basis elements is either a basis element or zero. For a convergent rewriting system, the product of two basis elements in the algebra $A_{{\Sigma}}$ is either a basis element or the linear combination of smaller basis elements obtained by term rewriting. Thus, saying that $\Sigma$ is convergent is equivalent to saying that the algebra $A_\Sigma$ is a deformation of the algebra~$A_{\Sigma_0}$ with respect to the corresponding filtration. This suggests that using deformation theory of algebras might shed light on convergence of rewriting systems. The main part of this paper uses this viewpoint extensively.

\section{Homotopy theory of associative algebras}

Deformation theory of algebras borrows a lot of intuition from algebraic topology: one may say that algebraic tools for studying continuous deformations of spaces are very much amenable to the case of deformations of algebras. In algebraic topology, an important invariant of a space is its cohomology algebra. However, that algebra itself does not capture many important homotopy invariants. It turns out that to remedy that one should either consider cohomology together with certain higher structures, or work with a bigger algebra of the same homotopy type, for example the differential graded algebra of singular cochains. This led Quillen to the general philosophy of homotopical algebra~\cite{MR0223432} suggesting that one should extend categories of algebras to better behaved ``model categories'' with a notion of weak equivalence, an abstraction of homotopy equivalence, and consider the homotopy category formed by equivalence classes. Alternatively, one may say that instead of studying an algebra it is often beneficial to study its model, which is a better behaved algebra of the same homotopy type in the model category. This section gives a recollection of the corresponding definitions to make the rest of the paper more readable to experts in Gr\"obner bases and rewriting.  

\subsection{Differential graded associative algebras} A sufficiently general recipe to define model categories is to consider simplicial algebras of the given type. However, in the case of associative algebras, it turns out to be possible to do homotopy theory in a slightly more hands-on way, using differential graded algebras.

\begin{definition}[Differential graded associative algebra]
A \emph{differential graded associative algebra}, is a pair $(A,d)$ where $A$ is a graded associative algebra, that is, a graded vector space equipped with the associative degree zero map $A\otimes A\to A$, and $d\colon A\to A$ is a degree $-1$ map satisfying the condition $d^2=0$ as well as the graded derivation property $d(a_1a_2)=d(a_1)a_2+(-1)^{|a_1|} a_1 d(a_2)$. 
\end{definition}

We denote by $\DGA$ the category whose objects are differential graded associative algebras and whose morphisms from $(A,d_A)$ to $(B,d_B)$ are (degree zero) algebra homomorphisms $f\colon A\to B$ that are chain maps, that is satisfy $f\circ d_A=d_B\circ f$. Such a morphism $f$ induces a morphism $f_\bullet\colon H_\bullet(A,d_A)\to H_\bullet(B,d_B)$ of graded associative algebras obtained by computing homology. We say that $f$ is a \emph{quasi-isomorphism} if the induced homology morphism $f_\bullet$ is an isomorphism. 

\subsection{Models of algebras} As indicated above, our general plan is to replace an associative algebra with a better behaved differential graded algebra of the same homotopy type. A mathematically precise meaning of the words ``better behaved'' is given by the notion of a cofibrant object in a closed model category. 
A motivated reader is invited to consult \cite{MR1650134,MR0223432} for foundations of the general theory of model categories. According to Hinich \cite[Th.~4.1.1]{MR1465117}, the category $\DGA$ admits a closed model category structure for which weak equivalences are quasi-isomorphisms, and the fibrations are surjections. In fact, Hinich works in the general framework of algebras over operads, and in the only case relevant for this paper, that of differential graded algebras concentrated in non-negative homological degrees, the same result is established by Jardine \cite{MR1478701}. To describe cofibrant objects, we need one more definition.

\begin{definition}[Triangulated quasi-free algebra]
A \emph{quasi-free algebra} is a differential graded algebra $(B,d_B)$ for which the underlying graded associative algebra is free, that is $B\cong T(V)$ for some graded vector space $V$. Such algebra is said to be \emph{triangulated} if its space of generators admits an extra decomposition into a direct sum (of graded vector spaces) 
 \[
V\cong \bigoplus_{j\ge 1} V^{(j)} ,
 \]
for which the differential of an element $v\in V^{(j+1)}$ belongs to the subalgebra generated by $\bigoplus_{1\le k\le j} V^{(k)}$ for all $j\ge 0$. 
\end{definition}

The cofibrant objects for Hinich's model category structure on $\DGA$ are exactly the retracts of triangulated quasi-free algebras. We remark that it is useful to think that triangulated algebras are an abstraction of the construction of ``killing cycles'' of Tate \cite{MR268172,MR86072}. The relevance of triangulated differential graded \emph{commutative} algebras in homotopy theory became apparent after the seminal paper of Sullivan \cite{MR646078}; for this reason, the are frequently called Sullivan algebras.  

\begin{definition}[Model of an algebra]
A \emph{model} of a differential graded algebra $(A,d_A)$ is a triangulated quasi-free algebra $(T(V),d)$ equipped with a surjective quasi-isomorphism $f$ to $(A,d_A)$. Such a model is said to be \emph{minimal} if its differential is \emph{decomposable}, that is for each $v\in V$, the element $d(v)$ is a combination of products of length at least two in the tensor algebra. 
\end{definition}

While minimal models of algebras are unique up to isomorphism, their existence is a much more subtle matter; see, for example, the unpublished note of Keller~\cite{Kel}. However, the target algebras $(A,d_A)$ of interest for us are in fact non-graded and non-differential, in other words, $A_n=0$ for $n\ne 0$ and $d_A=0$. Since we work over a field, minimal models for such algebras exist under very mild assumptions (see, for example, \cite{MR431172,MR1109665}) which hold in all cases that are of interest to us.

\subsection{Shafarevich complexes}

We shall now recall the notion of a Shafarevich complex of a system of elements in an algebra. Those complexes are very rarely models themselves, but they offer a good starting point for constructing a model, which will be one of the guiding principles for our main results. 

\begin{definition}[Shafarevich complex]
Let $A$ be an associative (non-differential non-graded) algebra. The \emph{Shafarevich complex} of a subset $S\subset A$ is the differential graded algebra
 \[
\Sha(S,A)=(A * T(U), d) . 
 \]
Here $A * T(U)$ is the free product of $A$ and the tensor algebra, $U$ is the graded vector space whose only non-zero component is $U_1\cong \k S$, and $d$ is the unique graded derivation satisfying $d(a)=0$ for $a\in A$ and $d(e_s)=s\in A$ for each basis element $s\in U_1$.
\end{definition}

It is an immediate consequence of the definition that the degree zero homology of $\Sha(S,A)$ is isomorphic to the quotient $\cong A/(S)$. In this paper, we shall only consider the Shafarevich complexes $\Sha(S,\k\langle X\rangle)$ for the free algebra $A=\k\langle X\rangle$; it is precisely those that were first defined in work of Golod and Shafarevich \cite{MR0161852} and later called the Shafarevich complexes by Golod in \cite{Golod}. In this particular case, the degree zero homology is the algebra~$\k\langle X\mid S\rangle$. Of course, $\Sha(S,\k\langle X\rangle)=(T(V),d)$ where $V$ is the graded vector space whose only non-zero components are $V_0=\k X$ and $V_1\cong \k S$, and $d$ is the unique graded derivation satisfying $d(x)=0$ for $x\in X$ and $d(e_s)=s\in \k\langle X\rangle\cong T(V_0)$ for each basis element $s\in V_1$. Thus, $\Sha(S,\k\langle X\rangle)$ is a triangulated quasi-free algebra whose space of generators is concentrated in homological degrees zero and one corresponding to killing certain cycles in $T(V_0)$. In that capacity, it was rediscovered by Lemaire~\cite{MR500930}, and used by Anick in his work on Hilbert series of associative algebras~\cite{MR644015,MR677714}. 

In general, the positive degree homology of $\Sha(S,\k\langle X\rangle)$ does not vanish, so that differential graded algebra is not a model of $\k\langle X\mid S\rangle$. In fact, it is known that the homology of $\Sha(S,\k\langle X\rangle)$ is generated by elements of homological degrees zero and one \cite{MR1799542}, but this remarkable result will not be immediately useful for us. We however will be mainly interested in models that extend Shafarevich complexes.

\begin{definition}[Acyclic extension of the Shafarevich complex]
Let $A\cong\k\langle X\mid S\rangle$ be an associative algebra presented by generators and relations. We say that a model of the algebra $A$ is an \emph{acyclic extension of the Shafarevich complex~$\Sha(S,\k\langle X\rangle)$} if it is concentrated in non-negative homological degrees, and in homological degrees zero and one is isomorphic to the Shafarevich complex~$\Sha(S,\k\langle X\rangle)$. 
\end{definition}

\section{Rewriting systems and perturbation of models}

Since any model of an associative algebra is quasi-free, it is not particularly interesting as an algebra: all the complexity is hidden in the differential. Thus, if we replace an algebra by a model, it is reasonable to expect that studying deformation theory is done via studying perturbations of the differential. The main result of this section confirms this expectation: essentially, it says that once we fix a filtration, deforming a monomial algebra with respect to that filtration is equivalent to perturbing the differential of its model. 

\begin{definition}[Word-homogeneous algebra]
Let us call a differential associative graded algebra $(B,d)$ \emph{word-homogeneous} if it has an extra grading by the free monoid $\langle X\rangle$, that is we have a decomposition into a direct sum of chain subcomplexes
 \[
(B,d)=\bigoplus_{u\in \langle X\rangle} (B_u,d) ,
 \]
such that $B_{u_1}B_{u_2}\subseteq B_{u_1u_2}$ for all $u_1,u_2\in \langle X\rangle$. For an element $b\in B_u$, we say the element $b$ is word-homogeneous of~\emph{grade}~$u$.
\end{definition}

As above, let us consider a rewriting system $\Sigma=(X,W,f)$. Clearly, the non-differential non-graded algebra $(A_{\Sigma_0},0)$ is word-homogeneous. It is easy to establish that its minimal model is word-homogeneous; while we shall consider models that are not necessarily non-minimal, we shall restrict ourselves to word-homogeneous models when considering monomial algebras. Moreover, since the algebra~$A_{\Sigma_0}$ is concentrated in homological degree zero, it is enough to work with word-homogeneous models concentrated in non-negative homological degrees. For such a model $(T(V),d)$, the differential graded subalgebra $(T(V_0),0)$ surjects onto $A_{\Sigma_0}$; in other words, the image of the vector space $V_0$ generates the algebra $A_{\Sigma_0}$. In the context of studying the rewriting system $\Sigma$, we are working with a distinguished set of generators $X$, so it makes sense to focus on models with $V_0\cong\k X$. Moreover, since 
$H_\bullet(T(V),d)\cong A_{\Sigma_0}$,
the image of $V_1$ under $d$ generates the two-sided ideal of $T(V_0)\cong\k\langle X\rangle$ generated by $W$; it makes sense to focus on models with $V_1\cong\k W$. Using the terminology we recalled earlier, this means that we work with word-homogeneous acyclic extensions of the Shafarevich complex~$\Sha(W,\k\langle X\rangle)$. Finally, we make one slightly less trivial assumption: similar to many situations in rational homotopy theory, we shall consider models of finite type, that is require that $\dim V_n<\infty$ for all $n\ge 0$. This is true for the minimal model of a monomial algebra with finitely many relations, as established by the second author in~\cite{Tam1}, and for some other models of interest.

The following result is a mild generalisation of \cite[Th.~5.1]{Tam1}, which mimics \cite[Th.~4.1]{MR3084563}, and is an adaptation to our case of the argument of Anick~\cite[Th.~1.4]{MR846601}. We say that a linear endomorphism of a word-homogeneous algebra is \emph{$\Sigma$-filtered} if it sends every element of grade $u\in \langle X\rangle$ to a combination of elements of grades strictly less than $u$ with respect to the ordering $<_\Sigma$.

\begin{theorem}\label{th:Perturb}
Suppose that $(T(V),d)$ is a word-homogeneous acyclic extension of finite type of the Shafarevich complex~$\Sha(W,\k\langle X\rangle)$. The rewriting system $\Sigma$ is convergent if and only if there exists an acyclic extension $(T(V),d+d')$ of the Shafarevich complex~$\Sha(R,\k\langle X\rangle)$ with a $\Sigma$-filtered perturbation~$d'$.
\end{theorem}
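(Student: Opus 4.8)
The plan is to compare the two differential graded algebras through the filtration of $T(V)$ by the monomial grade, fixing once and for all an extension of $<_\Sigma$ to a total well-ordering $\preceq$ on $\langle X\rangle$. Since $(T(V),d)$ is word-homogeneous, $d$ preserves this grade, while any $\Sigma$-filtered perturbation $d'$ strictly lowers it; consequently $d+d'$ is a filtered endomorphism whose associated graded differential is exactly $d$. This single observation drives both implications: it identifies the $E^1$-page of the spectral sequence of the filtered complex $(T(V),d+d')$ with $H_\bullet(T(V),d)\cong A_{\Sigma_0}$, concentrated in homological degree zero.

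For the construction of $d'$ (the direction assuming convergence), I would build it by induction on homological degree, starting from the forced values $d'|_{V_0}=0$ and $d'(e_w)=-f(w)$ for $w\in W$, the latter being $\Sigma$-filtered precisely by the compatibility axiom $\supp(f(w))\subset\{w'<_\Sigma w\}$. Writing $D=d+d'$ and assuming $D^2=0$ on the subalgebra generated by $V_{\leqslant n}$, extending over $v\in V_{n+1}$ amounts to solving $d(d'v)=-d'(dv)-d'(d'v)$ with $d'v$ of grade strictly below that of $v$. A short computation using $D^2=0$ on $T(V_{\leqslant n})$ shows the right-hand side is, in each grade, a $d$-cycle, so solvability is governed by $H_\bullet(T(V),d)$ grade by grade, the well-foundedness of $\preceq$ guaranteeing that this recursion terminates.

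The crux is the distinction between homological degrees. For $n\geqslant 2$ the obstruction lives in $H_{n-1}(T(V),d)$ with $n-1\geqslant 1$, which vanishes because $(T(V),d)$ resolves $A_{\Sigma_0}$; word-homogeneity lets us choose a bounding element of the same grade, so the extension is unobstructed and no hypothesis is required. The one genuine obstruction occurs at $n=1$, i.e.\ when extending $d'$ over $V_2$: here the obstruction is a degree-zero element, and unwinding $-d'(dv)$ with the help of $d(dv)=0$ shows it lies in the relation ideal $(R)$ and has grade strictly below that of $v$. Such an element is a $d$-boundary if and only if it lies in the monomial ideal $(W)$, i.e.\ if and only if its leading terms belong to $(W)$; this is precisely the statement that $\operatorname{gr}(R)=(W)$, which is equivalent to convergence of $\Sigma$. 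I therefore expect this $V_2$-stage to be the heart of the argument: it is where resolvability of ambiguities is encoded homologically, and it explains cleanly why higher syzygies impose no further conditions.

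For the converse, assuming an acyclic $(T(V),d+d')$ of the required form exists, I would run the spectral sequence of the grade filtration: its $E^1$-page is $A_{\Sigma_0}$ concentrated in homological degree zero, so it degenerates, and —- using that acyclicity together with the termination hypothesis makes the filtration exhaustive and the spectral sequence convergent —- one reads off $\operatorname{gr} A_\Sigma\cong A_{\Sigma_0}$. Hence the irreducible monomials, which always span $A_\Sigma$, are in fact linearly independent, so $\Sigma$ is convergent. The delicate point here, mirroring the construction above, is the convergence of the spectral sequence when the order type of $\preceq$ exceeds $\omega$; this is exactly where reduction-finiteness of the terminating system $\Sigma$ must be invoked to control the a priori infinite descending filtration.
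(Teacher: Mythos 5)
Your proposal is correct, and its ``only if'' direction (spectral sequence of the grade filtration, degeneration at $E^1$ because $H_\bullet(T(V),d)\cong A_{\Sigma_0}$ is concentrated in homological degree zero, with the transfinite order type flagged as the delicate point) is the same argument as the paper's. The ``if'' direction is organised genuinely differently. The paper fixes splittings $\bar\pi$ and $\pi$ of the projections of $T(V)$ onto $A_{\Sigma_0}$ and $A_\Sigma$, uses the finite type hypothesis to produce a word-homogeneous contracting homotopy $h$ with $[d,h]=1-\bar\pi$, and constructs $d'$ \emph{together with} a perturbed homotopy $h'$ by recursive formulas such as $d'(x)=-(h+h')(d+d')d(x)$; convergence enters through the fact that $\bar\pi-\pi$ is $\Sigma$-filtered, and the identity $(d+d')(h+h')=1-\pi$ on $\ker(d+d')$ certifies in one stroke that the perturbed algebra is a model of $A_\Sigma$. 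You instead run a degree-by-degree obstruction-theoretic extension, kill each obstruction cocycle grade by grade using acyclicity of $(T(V),d)$ in positive degrees, isolate the one genuine obstruction at the $V_2$ stage where convergence enters as $\operatorname{gr}(R)=(W)$, and then reuse the spectral sequence to certify acyclicity of the result. Your route makes transparent that only the degree-two generators carry an obstruction---which is precisely the content of the paper's Theorem \ref{th:MCLift}, proved there by the same leading-term descent you describe---while the paper's route yields an explicit contracting homotopy and a more self-contained verification that the output is a model.

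Two points should be tightened. First, ``a $d$-boundary if and only if it lies in $(W)$'' versus ``if and only if its leading terms belong to $(W)$'' conflates two statements: the $V_2$-obstruction is an element of $(R)$ that is in general not in $(W)$; what the descent requires is that at every stage the $\preceq$-leading monomial of the current remainder (which stays in $(R)$) lies in $(W)$, and this is what $\operatorname{gr}(R)=(W)$ guarantees. Relatedly, being $\Sigma$-filtered refers to $<_\Sigma$ and not to the auxiliary total order $\preceq$, so one must check (true, by multiplicativity and transitivity of $<_\Sigma$) that every term produced by the descent has grade $<_\Sigma$ the grade of $v$, not merely $\prec$ it. Second, an acyclic extension is by definition a \emph{model}, hence a triangulated quasi-free algebra; your construction produces the square-zero $\Sigma$-filtered perturbation and the quasi-isomorphism but omits the triangulation check, which is where the finite type hypothesis is used (linearly order the finitely many word-homogeneous components in each homological degree compatibly with $<_\Sigma$), as in the final paragraph of the paper's proof. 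Neither point is a genuine gap, but both deserve a sentence.
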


\noindent
Note that being an acyclic extension of the Shafarevich complex~$\Sha(R,\k\langle X\rangle)$ means in particular that $d'|_{V_0}=0$ and that for each basis element $e_w\subset V_1$, we have
 \[
d'(e_w)=-f(w),
 \]
so $d'$ is indeed $\Sigma$-filtered on elements of degree one. 

\begin{proof}
Suppose first that such a model exists. Extend the partial ordering $<_\Sigma$ to a total well ordering arbitrarily, and consider the filtration of the graded vector space~$T(V)$ associated to that ordering. Since we are dealing with a countable well-order that is not necessarily isomorphic to $\mathbb{N}$, one has to be careful, and either consider a sequence of spectral sequences, or consider generalised spectral sequences of countably filtered modules, such as the transfinite spectral sequences of Hu \cite{MR1718081} or spectral sequences of transfinitely filtered modules of Rahmati \cite{MR3218005}; either strategy works in our case, and allows one to construct a spectral sequence converging to $H_\bullet(T(V),d+d')$ with the zeroth term $E^0$ given by $(T(V),d)$. This forces our spectral sequence to collapse on the following page, since the homology $H_\bullet(T(V),d)$ is concentrated in homological degree zero. Examining the individual word-homogeneous components, we conclude that cosets of the monomial basis of $A_{\Sigma_0}$ form a basis of the vector space $A_{\Sigma}$, and therefore $\Sigma$ is convergent.

Suppose now that $\Sigma$ is convergent. If we ignore the differential, there exist surjective homomorphisms from the algebra $T(V)$ to both the monomial algebra $A_{\Sigma_0}$ and the algebra $A_{\Sigma}$: one may project it onto its part $T(V_0)\cong\k\langle X\rangle$ of homological degree $0$, and the latter admits obvious projection maps to its respective quotients. Let us choose splittings for those projections; this amounts to exhibiting two idempotent endomorphisms $\bar{\pi}$ and $\pi$ of $T(V)$ such that both of them annihilate all elements of positive homological degree, the former annihilates the ideal of $T(V_0)\cong\k\langle X\rangle$ generated by $W$, and the latter annihilates the ideal of $T(V_0)\cong\k\langle X\rangle$ generated by $R$. Since the model $T(V)$ is of finite type, there exists a word-homogeneous map $h\colon T(V) \longrightarrow T(V)$ such that $[d,h] = 1 - \bar{\pi}$.

We are going to define a derivation $d'\colon T(V) \longrightarrow T(V)$ of homological degree~$-1$ satisfying the requested conditions; since $d'$ is a derivation, it is enough to define it on the generators. We shall also define a map \[h'\colon \ker(d+d') \to T(V)\] of homological degree $1$ such that 
 \[\left.(d+d')(h+h')\right|_{\ker(d+d')}=1-\pi .\] This latter condition instantly implies that $(T(V),d+d')$ is a model of $A_\Sigma$.

We shall construct the maps and prove their properties inductively. More specifically, we shall prove by induction on $k\ge 0$ that one can define the values of the perturbation $d'$ on generators of homological degree $k+1$ and the values of $h'$ on elements of $\ker(d+d')$ of homological degree $k$ so that the following conditions hold:

-- \emph{perturbation}: both maps $d'$ (on generators of homological degree $k+1$) and~$h'$ (on elements of $\ker(d+d')$ of homological degree $k$) are $\Sigma$-filtered,

-- \emph{square-zero}: we have $(d+d')^2=0$ on generators of homological degree $k+1$,

-- \emph{homotopy}: we have $(d+d')(h+h')=1-\pi$ on elements of $\ker(d+d')$ of homological degree $k$; of course, according to the definition of the map $\pi$, this condition reads $(d+d')(h+h')=1$ on elements of positive homological degree.

As a basis of induction, we recall that the formula $d'(e_w)=-f(w)$ indicated in the statement of the theorem is $\Sigma$-filtered, and the square-zero condition is satisfied for degree reasons, as there are no elements of negative homological degree. Suppose that $x$ is a degree zero element of~grade $u$. We define
 \[
h'(x)=
\begin{cases}
\qquad \qquad \qquad\qquad 0 ,\ \qquad \quad \qquad \qquad \text{ if } x\notin (W),\\
-(h+h')((1-\pi)(x)-(d+d')h(x)), \text{ if } x\in (W).
\end{cases}
 \]
Since the map $h$ is word-homogeneous and since we already know that $d'$ is $\Sigma$-filtered on elements of homological degree $1$, the map $d'h$ is $\Sigma$-filtered. Also, the map \[(1-\pi)-dh=(1-\pi)-(1-\bar{\pi})=(\bar{\pi}-\pi)\] is $\Sigma$-filtered simply by definition of $<_\Sigma$. By induction with respect to the well-ordering $<_\Sigma$, we may assume the $\Sigma$-filtered condition for the map $h'$ evaluated on sum of those elements, and the perturbation condition for $h'$ evaluated on~$x$ follows. Finally, to show the homotopy property evaluated on elements of homological degree zero, we note that on elements $x\notin (W)$ this becomes $0=0$, and otherwise we have
 \[
(d+d')(h+h')(x)=(d+d')h(x)-(d+d')(h+h')((1-\pi)(x)-(d+d')h(x))
 \]
which, by induction on the well-ordering $<_\Sigma$, is equal to
\begin{multline*}
(d+d')h(x)-(1-\pi)((1-\pi)(x)-(d+d')h(x))=\\ (1-\pi)^2(x)-(1-\pi)(d+d')h(x)=(1-\pi)(x),
\end{multline*}
since $\pi$ vanishes on the image of $d+d'=(R)$ and $1-\pi$ is a projector.

To make the step of induction, we proceed in a similar way. To define $d'(x)$ for a word-homogeneous generator $x$ of homological degree $k+1>1$, we put
 \[
d'(x)=-(h+h')(d+d')d(x).
 \]
The $\Sigma$-filtered property for $d'$ now easily follows by induction. For the square-zero property, we note that 
\begin{multline*}
(d+d')^2(x)=(d+d')(d(x)-(h+h')(d+d')d(x))=\\
(d+d')d(x)-(d+d')(h+h')((d+d')d(x))=\\ (d+d')d(x)-(1-\pi)(d+d')d(x)=\pi((d+d')d(x))=0,
\end{multline*}
since $(d+d')d(x)\in\ker(d+d')$, and since $\pi$ vanishes on the image of $d+d'$. 
Suppose that $x$ is word-homogeneous of grade $u$. Note that whenever $x\in\ker(d+d')$, we have $x-(d+d')h(x)\in\ker(d+d')$ as well. Since $\bar{\pi}$ vanishes on elements of positive homological degree, we have 
$dh(x)=(1-\bar{\pi})(x)=x$, and so, using the perturbation condition for $d'$, we see that $x-(d+d')h(x)$ is a combination of elements of grades strictly less than $u$ with respect to the ordering $<_\Sigma$. Consequently, we may define $h'$ on elements of $\ker(d+d')$ of homological degree $k>0$ by the same inductive argument: we put
 \[
h'(x)=(h+h')(x-(d+d')h(x)).
 \]
Once again, a simple inductive argument shows that the $\Sigma$-filtered property for the map $h'$ and the homotopy condition are satisfied. 

We conclude that $(T(V),d+d')$ a differential graded algebra, with the homology $H_\bullet(T(V),d+d')$ isomorphic to $A_\Sigma$, and a $\Sigma$-filtered map $d'$, as required. It remains to check that it is triangulated. For that, we first decompose the space of generators $V$ according to the homological degree, and then for the fixed homological degree $n$, decompose $V_n$ into word-homogeneous components. Since we work with a model of finite type, there are finitely many word-homogeneous components in each homological degree, and so once we linearly order them extending the ordering $<_\Sigma$, the total order on thus obtained components is either finite or isomorphic to $\mathbb{N}$. This total order gives a triangulation: by construction, the differential of each component is made of products of elements from the previous components. This completes the proof of the theorem. 
\end{proof}

\section{Differential graded Lie algebras and the Diamond Lemma}

In this section, we present a deformation-theoretic version of the Diamond Lemma. This corresponds to the heuristics of deformation theory going back to Deligne, Drinfeld, and Feigin (and later formalised by Lurie \cite{MR2827833,Lur} and Pridham~\cite{MR2628795}) that, infinitesimally, any moduli space of deformations in characteristic zero is controlled by a Maurer--Cartan equation in a suitable differential graded Lie algebra. We remark that while using this heuristics requires to restrict ourselves to a ground field $\k$ of characteristic zero, one can easily see that replacing, in Theorem \ref{th:MCLift}, the Maurer--Cartan equation $[d+F,d+F]=0$ by the square-zero condition $(d+F)^2=0$ leads to a result that is valid without any assumption on the ground field.  

\subsection{Differential graded Lie algebras} We begin with a brief recollection of differential graded Lie algebras and their Maurer--Cartan elements. 

\begin{definition}[Differential graded Lie algebra]
A \emph{differential graded Lie algebra} is a pair $(L,d)$, where $L$ is a graded Lie algebra, that is a graded vector space equipped with a degree zero operation $L\otimes L\to L$, $a\otimes b\mapsto [a,b]$, called the \emph{Lie bracket}, which satisfies the graded skew-symmetry property $[a,b]=-(-1)^{|a||b|}[b,a]$ for homogeneous elements $a,b\in L$ and the graded Jacobi identity
 \[
[a,[b,c]]=[[a,b],c]+(-1)^{|a||b|}[b,[a,c]]
 \]
for homogeneous elements $a,b,c\in L$, and $d\colon L\to L$ is a degree $-1$ map satisfying the condition $d^2=0$ as well as the graded derivation property \[d[a,b]=[d(a),b]+(-1)^{|a|}[a,d(b)].\] 
\end{definition}

From the homotopical algebra point of view, one of the protagonist in the homotopy theory for differential graded Lie algebras is the set of Maurer--Cartan elements of such algebra. 

\begin{definition}
Let $(L,d)$ be a differential graded Lie algebra over a field $\k$ of characteristic different from two. The condition 
 \[
d(x)+\frac12[x,x]=0
 \]
is called the \emph{Maurer--Cartan equation} of $L$, and any solution $x\in L_{-1}$ is called a \emph{Maurer--Cartan element}. 
\end{definition}

Let $L$ be a graded Lie algebra. Note that for an element $x\in L$ of degree $-1$ and any $a\in L$, the Jacobi identity implies
 \[
[x,[x,a]]=[[x,x],a]+(-1)^{|x||x|}[x,[x,a]]=[[x,x],a]-[x,[x,a]],
 \] 
and therefore
 \[
[x,[x,a]]=\frac12[[x,x],a]. 
 \]
Thus, if we suppose that $x$ satisfies the condition $[x,x]=0$ (note that for elements of odd degree the graded antisymmetry property is actually symmetry, so this condition is non-empty), the degree $-1$ endomorphism $[x,-]$ squares to zero; the Jacobi identity implies that it also satisfies the derivation property. Thus, $(L,[x,-])$ is a differential graded Lie algebra.

\subsection{Tangent complexes and deformations}

Our Lie algebras of interest arise as Lie algebras of derivations of models of associative algebras. Suppose that $A$ is an associative algebra, and that $(B,d)$ is a model of $A$. The graded vector space $\Der(B)$ of all derivations of the graded associative algebra $B$ has a natural structure of a graded Lie algebra: the graded bracket 
\[[D_1,D_2]=D_1\circ D_2-(-1)^{|D_1||D_2|}D_2\circ D_1\]
of two derivations is again a derivation. Clearly, $d\in \Der(B)$ and 
 \[
[d,d]=d\circ d-(-1)^{|d||d|}d\circ d=2d^2=0 ;
 \]
as we saw above, the map $[d,-]\colon \Der(B)\to\Der(B)$ makes $\Der(B)$ a differential graded Lie algebra. We note that in the previous section, $d$ denoted the differential of the Lie algebra, and now our context forces us to consider differential graded Lie algebras whose differentials are of the form $[d,-]$ with $d$ being an element of the algebra, the differential of the model; we hope that this does not lead to a confusion.  This algebra is called the \emph{tangent complex} associated to the model $B$. Using the work \cite{MR814187} of Schlessinger and Stasheff as an inspiration, Hinich showed in~\cite{MR1465117} that the homotopy type of this Lie algebra does not depend on the choice of a model. 

In our context, the Maurer--Cartan heuristics of deformation theory acquires very precise meaning. Since the differential of the tangent complex of a model is of the form~$[d,-]$, the Maurer--Cartan equation in that differential graded Lie algebra is equivalent to the equation
$[d+x,d+x]=0$, which in turn is equivalent to the square-zero condition for the derivation $d+x$. Thus, the Maurer--Cartan equation for the differential graded Lie algebra $(\Der(B),[d,-])$ describes perturbations of the differential of $B$. 

\subsection{Perturbing solutions to the Maurer--Cartan equation}

In this section, we shall prove what is essentially the Diamond Lemma in disguise. It claims that having the Maurer--Cartan equation satisfied on generators of homological degree at most two of a model is sufficient to ensure existence of a perturbation. As above, we consider a rewriting system $\Sigma=(X,W,f)$.

\begin{theorem}\label{th:MCLift}
Suppose that $(T(V),d)$ is a word-homogeneous acyclic extension of finite type of the Shafarevich complex~$\Sha(W,\k\langle X\rangle)$. The rewriting system $\Sigma$ is convergent if and only if there exists an degree $-1$ element $F\in\Der(T(V))$ such that 
\begin{itemize}
\item for each $e_w\in V_1$, we have $F(e_w)=-f(w)$,
\item $F|_{V_2}$ is $\Sigma$-filtered,
\item the Maurer--Cartan equation for the element $F$ holds when evaluated on generators of homological degree at most two.
\end{itemize}
\end{theorem}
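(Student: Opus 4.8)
The plan is to derive Theorem~\ref{th:MCLift} from Theorem~\ref{th:Perturb}, handling the two implications very asymmetrically. Before anything else I would record that, among generators of homological degree at most two, the Maurer--Cartan condition $(d+F)^2=0$ is automatic on $V_0$ and $V_1$ for degree reasons: for $v\in V_0$ the element $(d+F)^2(v)$ has homological degree $-2$ and for $v\in V_1$ it has homological degree $-1$, so both vanish since $T(V)$ is concentrated in non-negative degrees. Hence the only genuine requirement is that $(d+F)^2$ vanish on $V_2$, where it takes values in homological degree zero, that is in $\k\langle X\rangle$; this is exactly the ``first non-trivial component'' of the Maurer--Cartan equation.

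The ``only if'' direction is then immediate from Theorem~\ref{th:Perturb}: convergence of $\Sigma$ supplies a $\Sigma$-filtered perturbation $d'$ for which $(T(V),d+d')$ is an acyclic extension of $\Sha(R,\k\langle X\rangle)$. Taking $F:=d'$, the equality $F(e_w)=-f(w)$ is the content of the remark following Theorem~\ref{th:Perturb}, the restriction $F|_{V_2}$ is $\Sigma$-filtered because all of $d'$ is, and $(d+d')^2=0$ yields the Maurer--Cartan equation on all generators, in particular on those of degree at most two.

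All the work is in the ``if'' direction, where the plan is to extend the given $F$ to a full $\Sigma$-filtered perturbation $d'$ with $(d+d')^2=0$ on all of $T(V)$ and then quote Theorem~\ref{th:Perturb}. I would define $F$ on generators by induction on homological degree, starting from its prescribed values on $V_{\le 2}$. Writing $D:=d+F$ for the partially defined derivation, set equal to zero on the generators not yet treated, the inductive hypothesis is that the degree $-2$ derivation $D^2$ vanishes on the subalgebra generated by $V_{\le k}$. For a generator $v\in V_{k+1}$ one then has $D^3(v)=D^2(d(v))=0$, so $D^2(v)$ is a $d$-cocycle modulo terms of strictly smaller $<_\Sigma$-grade; since $k\ge 2$ this cocycle lives in positive homological degree, where $(T(V),d)$ is acyclic. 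Applying the contracting homotopy $h$ of Theorem~\ref{th:Perturb} and setting $F(v):=-h(D^2(v))$ to leading order cancels the obstruction and pushes $D^2(v)$ into strictly lower grades.

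The main obstacle is to promote this leading-order cancellation to an exact one while keeping $F$ within the $\Sigma$-filtered maps. Because $F$ only lowers grade rather than preserving it, the term $h\,d(D^2(v))$ coming from $[d,h]=1-\bar{\pi}$ does not vanish but has strictly smaller $<_\Sigma$-grade, so $F(v)$ has to be defined by transfinite induction along the total well-ordering extending $<_\Sigma$, exactly as in the proof of Theorem~\ref{th:Perturb}; finiteness of type ensures that each homological degree has only finitely many word-homogeneous components, so the induction is well-founded. Since $h$ is word-homogeneous and $D^2(v)$ already has strictly smaller grade than $v$, the resulting $F(v)$ is automatically $\Sigma$-filtered. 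Iterating over all homological degrees produces the required perturbation and reduces the claim to Theorem~\ref{th:Perturb}.
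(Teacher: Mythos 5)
Your proposal is correct and follows essentially the same route as the paper: both directions reduce to Theorem~\ref{th:Perturb}, and the substantive ``if'' direction proceeds by the same double induction --- on homological degree of the generators, with an inner induction along the well-ordering $<_\Sigma$ that uses acyclicity of $(T(V),d)$ in positive degrees to write the leading word-homogeneous component of the obstruction $D^2(v)=\nabla(v)$ as a $d$-boundary and absorb it into $F(v)$. Your explicit use of the contracting homotopy $h$ and your observation that the Maurer--Cartan condition is vacuous on $V_0$ and $V_1$ are only cosmetic variations on the paper's argument.
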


\begin{proof}
In view of Theorem \ref{th:Perturb}, we have to prove that finding a perturbation that only satisfies the Maurer--Cartan equation on generators of homological degree at most two is enough to ensure that there exists a perturbation that is a Maurer--Cartan element. We shall show by induction on $k$ that we can find a $\Sigma$-filtered derivation $F$ for which the Maurer--Cartan equation holds on generators of homological degree at most $k$, the basis of induction being $k=2$.

Note that $[d+F,d+F]$ is the Lie bracket of derivations, thus itself a derivation, so since it vanishes on generators of homological degree at most~$k$, it vanishes on all elements of $T(V)$ of homological degree at most $k$. Let us consider a word-homogeneous element $c\in V_{k+1}$. We note that the element
 \[ 
\nabla(c)= \left(\partial(F)+\frac12[F,F]\right)(c)=d(F(c))+F(d(c))+F(F(c))\in T(V)
 \] 
is in the kernel of $d+F$. Indeed, we have 
 \[
d(\nabla(c))=d(d(F(c))+F(d(c))+F(F(c)))=(dF)((d+F)(c)),
 \]
and since the element $(d+F)(c)$ is of homological degree at most $k$, the Maurer--Cartan equation for $F$ holds when evaluated on that element. This means that
\begin{multline*}
d(\nabla(c))=-(F d+F^2)((d+F)(c))\\ =-F(d(F(c)))-F(F(d(c)))-F(F(F(c)))=-F(\nabla(c)),
\end{multline*}
as required. The element $\nabla(c)$ is of homological degree $k+1-2=k-1$, and we know that the Maurer--Cartan equation means that the derivation $d+F$ squares to zero on elements of degree at most $k$. Thus, it is not unreasonable to ask whether $\nabla(c)$ is in the image of $d+F$. Let us show that it is indeed the case. We write $\nabla(c)=x+y$, where $x$ is the word-homogeneous component of maximal grade~$u$. Since the differential $d$ is word-homogeneous, and the derivation $F$ is $\Sigma$-filtered on elements of degree at most $k$, it follows that 
 \[
\begin{cases}
d(x)=0,\\
d(y)+F(x+y)=0.
\end{cases}
 \]   
Since $|x|=|\nabla(c)|=k-1\ge 1$ and the model $(T(V),d)$ is acyclic in positive degrees, it follows that we can write $x=da$ for some $a\in T(V)$. The element
 \[ 
y-F(a)= \nabla(c) - (d+F)(a)
 \]
represents the same homology class for $d+F$, but its component of grade $u$ has been replaced by a combination of elements of grades strictly less than $u$ with respect to the ordering $<_\Sigma$. By induction with respect to the well-ordering $<_\Sigma$, we see that $\nabla(c)$ is a boundary of~$d+F$, so we can write $\nabla(c)=(d+F)(b)$. Let us now consider a new derivation $F'$ which coincides with $F$ on elements of homological degree at most $k$, but satisfies
\[F'(c)=F(c)-b.\]
We note that since $\nabla$ is $\Sigma$-filtered, each element $a$ above (and hence the resulting element $b$) is a linear combination of  elements of grades strictly less than the grade of $c$ with respect to the ordering $<_\Sigma$, so $F'$ is still $\Sigma$-filtered. Moreover, since the elements $d(c)$ and $F'(c)$ are of homological degree~$k$, we have
\begin{multline*} 
d(F'(c))+F'(d(c))+F'(F'(c))\\ =d(F(c)-b)+F(d(c))+F(F(c)-b)=\nabla(c)-d(b)-F(b)=0.
\end{multline*}
Making such adjustments for all elements of homological degree $k+1$, we obtain a perturbation that satisfies the Maurer--Cartan equation up to degree $k+1$, which allows us to proceed by induction. Since the correction does not change values of $F$ on generators of low homological degrees, we may consider the limiting value of $F$; it is a \emph{bona fide} Maurer--Cartan element extending the original perturbation of the differential. 
\end{proof}

\subsection{Maurer--Cartan equation and the Diamond Lemma}

In this section, we are finally rewarded for going through all the previous arguments involving models and the Maurer--Cartan equation; the reward is a general result stating that each word-homogeneous acyclic extension of~$\Sha(W,\k\langle X\rangle)$ leads to its own Diamond Lemma criterion for convergence of $\Sigma$. At the core of such result is the following definition of an S-polynomial associated to a generator of homological degree two of such model.  

\begin{definition}[Obstruction]
Let $(T(V),d)$ be a word-homogeneous acyclic extension of the Shafarevich complex~$\Sha(W,\k\langle X\rangle)$. For each element $c\in V_2$, the element $d(c)$ is of degree $1$, and therefore is of the form
 \[
d(c)=\sum_i a_i e_{w_i} b_i\in T(V_0)\otimes V_1\otimes T(V_0).
 \]  
We define the \emph{obstruction} $S_c\in\k\langle X\rangle$ associated to the element $c$ by the formula
 \[
S_c=\sum_i a_i f(w_i) b_i .  
 \]
\end{definition}
We remark that our usage of the term ``obstruction'' is different from that of Anick in \cite{MR846601}: for us, as it will become apparent below, $S_c$ is an actual obstruction to convergence of $\Sigma$.

We are now ready to state and prove our model-specific Diamond Lemma. As above, we consider a rewriting system $\Sigma=(X,W,f)$.

\begin{theorem}\label{thm:Diamond}
Suppose that $(T(V),d)$ is a word-homogeneous acyclic extension of finite type of the Shafarevich complex~$\Sha(W,\k\langle X\rangle)$. The rewriting system $\Sigma$ is convergent if and only if for each word-homogeneous element $c\in V_2$, the corresponding obstruction~$S_c$ is mapped to zero by some reduction with respect to~$\Sigma$.
\end{theorem}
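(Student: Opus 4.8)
The plan is to deduce this from Theorem~\ref{th:MCLift} by decoding what the Maurer--Cartan equation means on generators of homological degree two, so the real work is to match the condition ``$S_c$ reduces to zero'' with the existence of the perturbation~$F$ supplied by that theorem. First I would record two preliminary facts about the obstruction. Writing $d(c)=\sum_i a_i e_{w_i} b_i$ with $a_i,b_i\in T(V_0)$, the identity $d^2(c)=0$ forces $\sum_i a_i w_i b_i=0$ in $\k\langle X\rangle$; subtracting $S_c=\sum_i a_i f(w_i)b_i$ then shows $-S_c=\sum_i a_i(w_i-f(w_i))b_i\in (R)$, so $S_c$ represents zero in $A_\Sigma$. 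Moreover, since every monomial of $f(w_i)$ is $<_\Sigma w_i$, multiplicativity forces each monomial of $S_c$ to have grade strictly below the grade $u$ of~$c$.

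Next I would compute the relevant component of the Maurer--Cartan equation. For a degree $-1$ derivation $F$ with $F|_{V_0}=0$ and $F(e_w)=-f(w)$, the derivation property gives $F(d(c))=-S_c$, so $\nabla(c)=d(F(c))+F(d(c))+F(F(c))=0$ is equivalent to $(d+F)(F(c))=S_c$. Since any word-homogeneous degree-one element of $T(V)$ has the form $\sum_j p_j e_{w_j} q_j$ with $p_j,q_j\in T(V_0)$, and $(d+F)(e_{w_j})=w_j-f(w_j)$, a short calculation yields $(d+F)\bigl(\sum_j p_j e_{w_j} q_j\bigr)=\sum_j p_j(w_j-f(w_j))q_j$. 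Thus satisfying the Maurer--Cartan equation on $c$ with a $\Sigma$-filtered value $F(c)$ is exactly the same as exhibiting $S_c=\sum_j p_j(w_j-f(w_j))q_j$ with each $p_j w_j q_j$ of grade strictly below~$u$.

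With this dictionary both implications are within reach. If $\Sigma$ is convergent, then since $S_c$ represents zero in $A_\Sigma$ and the system is terminating, any reduction driving $S_c$ to an irreducible element must reach the unique normal form of the zero coset, namely $0$; hence $S_c$ reduces to zero. Conversely, given a reduction $\rho=\rho_n\circ\cdots\circ\rho_1$ with $\rho(S_c)=0$, set $g_0=S_c$ and $g_k=\rho_k(g_{k-1})$ and telescope $S_c=\sum_{k}(g_{k-1}-\rho_k(g_{k-1}))$; each basic step contributes a single term $\lambda_k a_k(w_k-f(w_k))b_k$, which is precisely a presentation of the required form. Setting $F(e_w)=-f(w)$, $F(c)=\sum_k \lambda_k a_k e_{w_k} b_k$, and $F=0$ on generators of degree $\ge 3$ then produces a derivation satisfying the Maurer--Cartan equation up to degree two, and Theorem~\ref{th:MCLift} delivers convergence.

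The sign and derivation bookkeeping in the two displayed computations are routine; the step demanding genuine care is the grade control in the converse direction, since it is exactly what makes the constructed $F$ be $\Sigma$-filtered and hence eligible for Theorem~\ref{th:MCLift}. Here I would argue that every reduction strictly lowers the grade of the monomial it rewrites and leaves the others unchanged, so each monomial occurring anywhere along the reduction of $S_c$ has grade at most that of $S_c$; combined with the preliminary observation that $S_c$ already sits strictly below the grade of $c$, this guarantees that every word $a_k w_k b_k$ has grade $<u$, so $F(c)$ is indeed a combination of elements of grade strictly below~$u$.
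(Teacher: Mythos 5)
Your proposal is correct, and its computational core --- invoking Theorem~\ref{th:MCLift} and decoding the Maurer--Cartan equation on a grade-$u$ generator $c\in V_2$ as the identity $S_c=(d+F)(F(c))=\sum_j p_j(w_j-f(w_j))q_j$ with all terms of grade strictly below $u$ --- is exactly the computation in the paper's proof. Where you genuinely diverge is in how the direction ``$\Sigma$ convergent $\Rightarrow$ $S_c$ reduces to zero'' is closed. The paper derives both directions from a claimed equivalence between (i) the existence of such a two-sided representation of $S_c$ with grade control and (ii) the existence of a reduction sending $S_c$ to zero; the implication (i)$\Rightarrow$(ii) is only asserted there (``immediately suggests a sequence of basic reductions'') and is the one step a careful reader might want spelled out, since a summand $a_i'(w_i'-f(w_i'))b_i'$ of a given representation need not correspond to a monomial actually present at the relevant stage of a reduction of $S_c$. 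You sidestep this entirely: from $d^2(c)=0$ you deduce $\sum_i a_iw_ib_i=0$, hence $S_c\in(R)$, and then termination together with uniqueness of normal forms forces any irreducible reduct of $S_c$ to coincide with the normal form $0$ of the zero coset. That is a fully rigorous and arguably cleaner route for that direction, at the price of appealing to the definition of convergence directly rather than staying inside the Maurer--Cartan dictionary. For the converse direction, your telescoping of a reduction into a representation, the definition of $F$ on $V_2$ with $F=0$ in higher degrees, and the grade bookkeeping showing that $F|_{V_2}$ is $\Sigma$-filtered all match the paper's argument.
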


\begin{proof}
According to Theorem \ref{th:MCLift}, $\Sigma$ is convergent if and only if there exists a degree $-1$ element $F\in\Der(T(V))$ such that
\begin{itemize}
\item for each $e_w\subset V_1$, we have $(d+F)(e_w)=w-f(w)$,
\item $F|_{V_2}$ is $\Sigma$-filtered,
\item the Maurer--Cartan condition $[d+F,d+F]=0$ for the element $F$ holds when evaluated on generators of homological degree at most two.
\end{itemize}
Since the Maurer--Cartan equation evaluated on an element of homological degree $k$ is an element of homological degree~$k-2$, we may replace ``at most two'' by ``exactly two'' without changing the statement. 

Suppose $c\in V_2$ is word-homogeneous of grade $u$. We note that the formula \[S_c=\sum_i a_i f(w_i) b_i\] for the obstruction $S_c$ means that $S_c$ is a combination of terms of grade less than~$u$, so existence of its reduction equal to zero is clearly equivalent to existence of a representation of $S_c$ as a two-sided linear combination 
 \[
S_c=\sum_i a_i' (w_i'-f(w_i')) b_i',
 \]
where all the terms in that sum are of grade less than $u$. Indeed, a sequence of basic reductions produces such a combination, and \emph{vice versa}, existence each such combination immediately suggests a sequence of basic reductions. 

Recall that the Maurer--Cartan equation for $F$ is exactly the square-zero condition for $d+F$. Let us evaluate it on an element $c\in V_2$ of grade~$u$. We obtain
 \[
d(d(c))+F(d(c))+d(F(c))+F(F(c))=0.
 \]
We already remarked that the element $d(c)\in T(V)$ is of homological degree one, and therefore can be written as 
 \[
d(c)=\sum_i a_i e_{w_i} b_i \in T(V_0)\otimes V_1\otimes T(V_0).
 \]
The first term in the square-zero condition vanishes, while the second is 
 \[
F(d(c))=F\left(\sum_i a_i e_{w_i} b_i\right)=\sum_i a_i F(e_{w_i}) b_i=-\sum_i a_i f(w_i) b_i=-S_c. 
 \]
The remaining terms in the square-zero condition are 
 \[
d(F(c))+F(F(c))=(d+F)(F(c)).
 \] 
Since $F|_{V_2}$ is $\Sigma$-filtered, the element $F(c)$ is of the form $\sum_i a_i' e_{w_i'} b_i'$, where all the terms in the sum are of grades less than $u$. Finally, we obtain
 \[
d(F(c))+F(F(c))=(d+F)\left(\sum_i a_i' e_{w_i'} b_i'\right)=\sum_i a_i' (w_i'-f(w_i')) b_i'.
 \]
We conclude that the validity of the Maurer--Cartan equation on $c\in V_2$ of grade~$u$ is equivalent to the equation
 \[
S_c=\sum_i a_i' (w_i'-f(w_i')) b_i',
 \]
where all the terms in that sum are of grade less than $u$, which, according to our observation above, completes the proof.
\end{proof}

\section{Examples: specific cases of models and their Diamond Lemmas}

In this section, we discuss two particular examples of models of monomial algebras that allow us to recover the Diamond Lemma and the Triangle Lemma for classical term rewriting. 

\subsection{The inclusion-exclusion model}

In \cite{MR3084563}, the first author and Khoroshkin constructed a combinatorial (usually non-minimal) model for a shuffle operad with monomial relations; specialising to shuffle operads generated by elements of arity~$1$, one obtains a model for an associative algebra with monomial relations. For the benefit of the reader who does not wish to go through the operad construction, we describe that model here. As above, we denote $A_{\Sigma_0}=\k\langle X\mid W\rangle$.

Let $u\in\langle X\rangle$. Suppose that there are exactly $m$ different divisors of the word $u$ that are among the monomial relations $W$. We introduce formal symbols $D_{u,1},\ldots,D_{u,m}$ in one-to-one correspondence with those divisors, and denote by $A_{u}$ the Grassmann algebra $\Lambda(D_{u,1},\ldots,D_{u,m})$. The direct sum 
 \[
A_W:=\bigoplus_{u\in\langle X\rangle} A_u
 \]  
has an associative algebra structure defined as follows. For an element $u'\in\langle X\rangle$ which is a divisor of $u$, there is a natural inclusion $A_{u'}\hookrightarrow A_u$ sending each generator $D_{u',i}$ to the corresponding $D_{u,j}$, where $j$ is the position of the $i$-th divisor of $u'$ (viewed now as a divisor of $u$) in the list of divisors of $u$. The product of two elements $a_1\in A_{u_1}$ and $a_2\in A_{u_2}$ is the product in $A_{u_1u_2}$ of their images with respect to the inclusions $A_{u_1},A_{u_2}\hookrightarrow A_{u_1u_2}$. If we set the homological degree of each $D_{u,i}$ to be equal to one, the algebra $A_W$ is graded. Moreover, it has a differential graded algebra structure, where the differential $d$, when restricted to each Grassmann algebra $A_u$, is the unique derivation of that algebra sending all generators $D_{u,i}$ to~$1$. 

Let us call a Grassmann monomial $D_{u,i_1}\wedge\cdots\wedge D_{u,i_k}\in A_u$ \emph{indecomposable} if there does not exist a factorisation $u=u_1u_2$ into a product of two non-empty monomials $u_1$ and $u_2$ for which each of the divisors of $u$ corresponding to $S_{u,i_p}$ is either a divisor of $u_1$ or a divisor of $u_2$. It is easy to see that as an associative algebra, the algebra $A_W$ is freely generated by its indecomposable elements. Moreover, according to \cite[Th.~2.2]{MR3084563}, the differential graded algebra $(A_W,d)$ is a model of the algebra $A_{\Sigma_0}$. We call this model the \emph{inclusion-exclusion model}. It is immediately seen to be a word-homogeneous acyclic extension of finite type of the Shafarevich complex~$\Sha(W,\k\langle X\rangle)$. Moreover, all homological degree two word-homogeneous generators $c$ of that model of grade $u\in\langle X\rangle$ are of two possible forms:
\begin{itemize}
\item \emph{inclusion}: $c=S_{u,i}\wedge S_{u,j}$ where the $i$-th divisor of $u$ is its proper divisor $u'$ and the $j$-th divisor of $u$ is equal to~$u$,
\item \emph{overlap}: $c=S_{u,i}\wedge S_{u,j}$ where the $i$-th divisor of $u$ is a left divisor $u'$ of $u$, the $j$-th divisor of $u$ is a right divisor $u''$ of $u$, and these two subwords have a non-trivial common divisor.
\end{itemize}
For each generator of the first type, we have $u=au'b$ for some $a,b\in\langle X\rangle$, therefore such generators are in one-to-one correspondence with the inclusion ambiguities of~\cite{Bergman}. We note that for such generator, we have  
 \[d(c)=e_u-ae_{u'}b,\]
and the obstruction associated to this generator is the basic reduction associated to the triple $(a,u',b)$ applied to the relation $u-f(u)$.    
Similarly, for each generator of the second type, we have $u=u'a=bu''$ for some $a,b\in\langle X\rangle$, therefore such generators are in one-to-one correspondence with the overlap ambiguities of \emph{op. cit.}. Moreover, for such generator, we have 
\[d(c)=be_{u''}-e_{u'}a,\] 
and the obstruction associated to this generators is the element usually called the S-polynomial corresponding to the overlap of $u'$ and $u''$ \cite{MR3642294}. This proves the following result.

\begin{proposition}[Bergman's Diamond Lemma {\cite[Th.~1.2]{Bergman}}]
The rewriting system $\Sigma$ is convergent if and only if each basic reduction of each relation from $R$ and each S-polynomial between two  relations from $R$ is mapped to zero by some reduction with respect to~$\Sigma$.
\end{proposition}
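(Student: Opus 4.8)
The plan is to apply Theorem~\ref{thm:Diamond} to the inclusion--exclusion model $(A_W,d)$ and then to translate the resulting criterion into Bergman's vocabulary. The discussion preceding the statement already records everything needed to begin: by \cite[Th.~2.2]{MR3084563} the differential graded algebra $(A_W,d)$ is a model of $A_{\Sigma_0}$, it is freely generated by its indecomposable Grassmann monomials with $V_0\cong\k X$ and $V_1\cong\k W$, and it is a word-homogeneous acyclic extension of finite type of $\Sha(W,\k\langle X\rangle)$. Thus the hypotheses of Theorem~\ref{thm:Diamond} are met, and convergence of $\Sigma$ is equivalent to the statement that for every word-homogeneous generator $c\in V_2$ the obstruction $S_c$ reduces to zero.

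The substance of the proof is therefore the classification of the word-homogeneous degree-two generators, which I would carry out combinatorially. A generator of grade $u$ is an indecomposable Grassmann monomial $D_{u,i}\wedge D_{u,j}$, where the two factors correspond to two divisors of $u$ lying in $W$; indecomposability forbids any factorisation $u=u_1u_2$ into non-empty words placing each divisor entirely inside $u_1$ or inside $u_2$. Viewing the two divisors as sub-intervals of $u$, one checks first that indecomposability forces the two intervals to cover all of $u$ (otherwise one cuts off an uncovered end), and then that two covering intervals admit no separating cut precisely when either one is nested inside the other---forcing the outer one to equal $u$, so that $u\in W$ and $u=au'b$ with $u'\in W$ (the \emph{inclusion} case)---or they genuinely overlap as a proper left divisor $u'$ and a proper right divisor $u''$ sharing a non-trivial common part, so that $u=u'a=bu''$ with $u',u''\in W$ (the \emph{overlap} case). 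These two families are in bijection with the inclusion and overlap ambiguities of \cite{Bergman}.

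It then remains to evaluate $S_c$ in each case and match it with the corresponding object, which is an immediate application of the definition of the obstruction to the formulas for $d(c)$ recorded above. In the inclusion case $d(c)=e_u-ae_{u'}b$ gives $S_c=f(u)-af(u')b$, and rewriting $af(u')b=u-a(u'-f(u'))b$ yields $S_c=-(u-f(u))+a(u'-f(u'))b$, which equals, up to sign, the result of applying the basic reduction associated to the triple $(a,u',b)$ to the relation $u-f(u)$; hence $S_c$ reduces to zero exactly when that basic reduction of a relation does. In the overlap case $d(c)=be_{u''}-e_{u'}a$ gives $S_c=bf(u'')-f(u')a$, which is, up to sign, precisely the S-polynomial associated to the overlap of $u'$ and $u''$. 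Since the inclusion and overlap families exhaust all word-homogeneous degree-two generators, the condition of Theorem~\ref{thm:Diamond} that every $S_c$ reduce to zero becomes precisely Bergman's condition, which completes the proof.

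The step I expect to demand the most care is the combinatorial classification: one must confirm that indecomposability of a degree-two Grassmann monomial leaves no configuration other than nesting (inclusion) and proper overlap, and that the resulting assignment to Bergman's two kinds of ambiguities is a genuine bijection, so that no ambiguity is omitted and none is double-counted.
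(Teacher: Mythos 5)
Your proposal is correct and follows essentially the same route as the paper: the paper's proof consists precisely of applying Theorem~\ref{thm:Diamond} to the inclusion--exclusion model, classifying the word-homogeneous degree-two generators into the inclusion and overlap families, and identifying the obstructions $S_c$ (via the stated formulas for $d(c)$) with basic reductions of relations and S-polynomials respectively. The only difference is that you spell out the interval-combinatorics argument showing indecomposability leaves exactly these two configurations, which the paper asserts without detail; this is a welcome but not divergent elaboration.
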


As an immediate consequence, we see that the rewriting system \[xyz\mapsto x^3+y^3+z^3\] on $\k\langle x,y,z\rangle$ discussed in Section \ref{sec:Rewriting} is convergent, since we already established its termination, and the word $xyz$ does not have self-overlaps. 

\subsection{The minimal model}

In \cite{Tam1}, the second author constructed, for each monomial algebra $A_{\Sigma_0}=\k\langle X\mid W\rangle$, the minimal model which we shall now recall and use. In this section, the rewriting system $\Sigma$ is assumed minimal: $W$ contains no words of length one and no words from $W$ divide one another. 

We begin with recalling the definition of (right) \emph{Anick chains}, a notion that was to some extent present implicitly in work of Backelin \cite{MR551760}, then used by Green, Happel and Zacharia \cite{MR769766}, and became known to a wider community from the work of Anick \cite{MR846601}. 

\begin{definition}[Anick chain]
We say that an \emph{Anick $0$-chain} is an element $x\in X$, and define the \emph{tail} of such element to be equal to it. 
For $n>0$, we say that an \emph{Anick $n$-chain} is a monomial $c\in\langle X\rangle$ such that
\begin{enumerate}
\item we can write $c = c't$ where $c'$ is an Anick $(n-1)$-chain,
\item if $t'$ is the tail of $c'$, then $t't$ has a right divisor which is a relation from $W$, 
\item no proper left divisor of $c$ satisfies the first two conditions. 
\end{enumerate}
The \emph{tail} of $c$ is the element $t_c := t$. 
\end{definition}

In particular, the Anick $1$-chains are the defining relations $W$, with the tail of each monomial relation given by the monomial obtained by deleting its first letter, and the Anick $2$-chains are precisely the monomials $c\in\langle X\rangle$ which can be written as $c =u'a=bu''$ for two relations $u',u''\in W$, and which do not have other divisors from $W$.

It is shown in \cite{MR846601} that a monomial $c$ admits at most one structure of a chain: if $c$ is an $n$-chain with tail $t_c$, then both $n$ and $t_c$ are uniquely determined. We define the homological degree of an Anick $n$-chain $c$ to be equal to~$n$, and consider the graded associative algebra $B$ freely generated by Anick chains. This algebra has a unique derivation $d$ whose value $d(c)$ on an Anick $n$-chain $c$ is the sum over all ways to represent $c$, viewed as a monomial, as a concatenation $c_1\cdots c_k$ of an Anick $n_1$-chains $c_1$, \ldots, an Anick $n_k$-chain $c_k$ with $n_1+\cdots+n_k=n-1$, of the terms 
 \[
-(-1)^{\binom{n+1}{2}+n_1}c_1\otimes\cdots\otimes c_k.
 \]
The main theorem of \cite{Tam1} states that for a minimal rewriting system $\Sigma$, the quasi-free algebra $(B,d)$ is the minimal model of $A_{\Sigma_0}$. It is a word-homogeneous acyclic extension of finite type of the Shafarevich complex~$\Sha(W,\k\langle X\rangle)$.  

The description of Anick $2$-chains mentioned above indicates that they are among the overlaps from the previous section; in fact, they are \emph{minimal overlaps} that have only two divisors from $W$. For the differential of the model, we have $d(c)=be_{u''}-e_{u'}a$, as before. This proves the following result.

\begin{proposition}[Triangle Lemma {\cite[Sec.~2.4.3]{MR3642294}}]
A minimal rewriting system $\Sigma$ is convergent if and only if each S-polynomial corresponding to a minimal overlap is mapped to zero by some reduction with respect to~$\Sigma$.
\end{proposition}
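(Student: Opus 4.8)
The plan is to derive the Triangle Lemma as a direct corollary of Theorem \ref{thm:Diamond} applied to the specific model $(B,d)$ furnished by \cite{Tam1}. Since that theorem characterises convergence in terms of the obstructions $S_c$ attached to the word-homogeneous generators $c\in V_2$ of \emph{any} word-homogeneous acyclic extension of finite type of $\Sha(W,\k\langle X\rangle)$, the only work remaining is to identify what the degree-two generators and their obstructions are for the minimal model, and to verify that $(B,d)$ meets the hypotheses of the theorem.

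First I would record, citing the main theorem of \cite{Tam1}, that $(B,d)$ is indeed a word-homogeneous acyclic extension of finite type of $\Sha(W,\k\langle X\rangle)$, so that Theorem \ref{thm:Diamond} is applicable; this is already asserted in the excerpt and requires no new argument. Next I would pin down the degree-two generators: by the recalled description, the Anick $2$-chains are exactly the monomials $c\in\langle X\rangle$ that factor as $c=u'a=bu''$ for two relations $u',u''\in W$ with no other divisors from $W$. These are precisely the \emph{minimal overlaps}, the overlaps of the inclusion-exclusion analysis that involve only the two relations forming the overlap and no further relation. I would then compute the differential on such a $c$: from the chain differential formula (the sum over concatenations into lower chains, with the stated signs), the only contributing decompositions of a $2$-chain $c=u'a=bu''$ are into the two $1$-chains $u'$ and $u''$ read off from its two factorisations, giving $d(c)=be_{u''}-e_{u'}a$, exactly as stated in the excerpt and matching the overlap case of the inclusion-exclusion model.

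With the differential in hand, the obstruction follows mechanically from the definition: writing $d(c)=be_{u''}-e_{u'}a \in T(V_0)\otimes V_1\otimes T(V_0)$ and applying the formula $S_c=\sum_i a_i f(w_i)b_i$, I obtain $S_c = b\,f(u'') - f(u')\,a$, which is precisely the S-polynomial associated to the overlap of $u'$ and $u''$ in the sense of \cite{MR3642294}. Theorem \ref{thm:Diamond} then says that $\Sigma$ is convergent if and only if each such $S_c$ is mapped to zero by some reduction, and since the degree-two generators $c$ range exactly over the minimal overlaps, this is the assertion of the proposition. The proof is therefore essentially a dictionary translation between the model-theoretic language of Theorem \ref{thm:Diamond} and the combinatorial language of Anick chains.

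The one step that carries genuine content, rather than pure bookkeeping, is the verification that the Anick $2$-chains coincide with minimal overlaps and that \emph{no other} degree-two generators occur. The hard part will be being precise about the minimality condition ``no other divisors from $W$'': one must check that the defining conditions of an Anick $2$-chain (that $c=c't$ with $c'$ a $1$-chain, that the tail-overlap $t't$ has a right divisor in $W$, and that no proper left divisor already satisfies these conditions) force exactly the two-relation overlap structure and exclude configurations with a third interfering relation. This is where the restriction to a \emph{minimal} rewriting system (no length-one words in $W$, no word dividing another) is used, and it is the only place where the minimal model genuinely differs from the inclusion-exclusion model, which also carries the ``inclusion'' generators. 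Once that combinatorial identification is made, the rest of the argument is a direct invocation of Theorem \ref{thm:Diamond}.
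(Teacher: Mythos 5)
Your proposal is correct and follows the paper's own route essentially verbatim: invoke the main theorem of \cite{Tam1} to see that the minimal model $(B,d)$ is a word-homogeneous acyclic extension of finite type of $\Sha(W,\k\langle X\rangle)$, identify the Anick $2$-chains with the minimal overlaps $c=u'a=bu''$ having no other divisors from $W$, read off $d(c)=be_{u''}-e_{u'}a$ and hence $S_c=b\,f(u'')-f(u')\,a$, and apply Theorem~\ref{thm:Diamond}. The only difference is that you explicitly flag the identification of Anick $2$-chains with minimal overlaps as the step carrying content, which the paper treats as immediate from the recalled definition.
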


For example, for the rewriting system $x^3\mapsto xyz-y^3-z^3$ arising in the example discussed in Section \ref{sec:Rewriting} for a total multiplicative ordering, the monomial $x^3$ has two self-overlaps, $x^4=x^3\cdot x=x\cdot x^3$ and $x^5=x^3\cdot x^2=x^2\cdot x^3$. Only the first of them is minimal. This means that there is no need to consider the S-polynomial corresponding to the overlap $x^5$ when checking the Diamond Lemma criterion. That said, in this case already the S-polynomial corresponding to the overlap $x^4$ cannot be reduced to zero, so the rewriting system is not convergent, as we indicated when discussing that example.

\section{Analogues and generalisations}

In words of Kontsevich and Soibelman \cite{MR1805894}, ``the deformation theory of associative algebras is a guide for developing the deformation theory of many algebraic structures; conversely, all the concepts of what should be the ``deformation theory of everything'' must be tested in the case of associative algebras''. We believe that our work constitutes a successful test of putting the Diamond Lemma in the homotopical context, and it is natural to conclude this paper with a discussion of analogues and possible generalisations of our results. 

\subsection{Multiplicative algebraic structures}

We begin with outlining a context in which an immediate generalisation of our approach is available.

\begin{definition}[Multiplicative algebraic structure]
We say that an algebraic structure $\mathsf{P}$ is \emph{multiplicative} if it is described by a coloured operad obtained as a $\k$-linear span of a set-theoretic operad. In classical terms, we require that the free $\k$-linear $\mathsf{P}$-algebra generated by a vector space $V$ is equal to the $\k$-linear span of the free $\mathsf{P}$-algebra in the category of sets generated by a basis of $V$.  
\end{definition}

The key consequence of this definition is that free $\k$-linear $\mathsf{P}$-algebras are naturally equipped with monomial bases, so one can talk about rewriting systems and orderings of monomials, and most of our results apply \emph{mutatis mutandis} if one can talk about models of algebras in the same way as above. Work of Hinich~\cite{MR1465117} mentioned earlier allows one to verify that condition easily enough. 

Some remarks are in order. First, talking about coloured operads means that this formalism includes, for instance, quotients of path algebras, since the latter are obtained as $\k$-linear spans of small categories and are not just algebras in a very classical sense. Second, there are situations where the deviation from the set-theoretic property is ``moderate'', i.e. the value of any structure operation on basis elements is a basis element up to a non-zero constant; it is easy to extend our work to such algebras. Finally, it is also possible to include algebras that are complete in a suitable sense; we already indicated in the introduction that complete commutative associative algebras were one of the central examples at the moment of inception of both topics discussed in this paper, through the theory of standard bases of Hironaka \cite[Sec.~III.1]{MR0199184} and the deformation theory of analytic spaces of Palamodov \cite{MR0508121}.

\subsubsection{Commutative associative algebras}

The free commutative associative algebra coincides with the linear span of the free commutative monoid, and the homotopy theory for commutative associative algebras suggests that our results work over a field of characteristic zero. An analogue of the inclusion-exclusion model exists, and generators of homological degree at most two of the minimal model of a given monomial algebra can be  determined directly (for further information on models of monomial algebras, we refer the reader to \cite{MR2188858}). If the partial order $<_\Sigma$ comes from a total multiplicative order of monomials, the result corresponding to the inclusion-exclusion model is the Buchberger criterion~\cite{MR2202562}, and the result corresponding to the minimal model is known as the Chain Criterion~\cite{MR575678,10.1145/1088261.1088267}. 

\subsubsection{Non-associative algebras}

If one considers free non-associative algebras, better known as magmatic algebras for one binary operation and as absolutely free algebras in the more general case, the free algebra is the linear span of the free set-theoretic magma (consisting of appropriate decorated planar trees), and models  are surprisingly manageable. In particular, the inclusion-exclusion model of a given monomial algebra is readily available, and the minimal model for a monomial algebra whose set of generators and relations are already chosen minimal is simply the non-associative Shafarevich complex, since there are no non-associative overlaps. As a consequence, for a general rewriting system  convergence is equivalent to the fact that all basic reductions between the relations $R$ are mapped to zero by some reduction with respect to~$\Sigma$, and, in particular, a rewriting system where pairwise reductions are impossible is convergent. This result goes back to \cite{MR0037831}, see also \cite[Appendix~3]{MR1292459} and a more recent paper \cite{MR2202553}. 

\subsubsection{Shuffle algebras}

Shuffle algebras \cite{MR3095223,MR2918719}, known also as permutads \cite{MR2995045}, are associative algebras for the monoidal category of non-symmetric sequences with respect to the monoidal structure given by Cauchy tensor product. The corresponding definition is also available for sets; elements of the free monoid are decorated permutations and their suitable generalisations. All results of this paper, including a generalisation of the inclusion-exclusion model and the second author's minimal model, can be adapted; the corresponding results are the Diamond Lemma for shuffle algebras \cite[Th.~4.5.1.4]{MR3642294} and the Triangle Lemma for shuffle algebras \cite[Prop.~4.5.3.2]{MR3642294}.

\subsubsection{Non-symmetric operads and shuffle operads}

When one considers operads, whether non-symmetric or shuffle ones, each free operad is the linear span of the corresponding free set operad consisting of appropriate decorated planar trees. It is important to note that the algebraic structure on the space of decorated trees in this case is much richer than the one considered in the case of magmatic algebras above, so the corresponding Diamond Lemmas (and especially their applications) are sufficiently non-trivial. Models are once again available under relatively mild assumptions \cite{MR1380606}. In particular, the inclusion-exclusion model can be defined for every shuffle operad with monomial relations \cite[Sec.~2]{MR3084563}. This way one obtains the Diamond Lemma for non-symmetric operads and for shuffle operads \cite{MR3642294}. 

Finding an explicit description of the minimal model of the given operad with monomial relations is an open problem. Partial results in the spirit of the Triangle Lemma are available for operads~\cite{MR3642294}, but even in homological degree two, the computation of the spaces of generators of minimal models of monomial operads still has to be completed in full generality. One calculation hinting at the noticeable complexity of this question was made by Sk\"oldberg several years ago (unpublished), and we shall now describe the result of that calculation; another discussion of that phenomenon can be found in the recent preprint of Iyudu and Vlassopoulos~\cite{iyudu2020homologies}. 

The minimal model of a monomial operad is homogeneous: one can separate generators according to their underlying tree monomials. In the case of associative algebras, minimal models of monomial algebras satisfy the homological purity condition: for each monomial, the corresponding component of the space of generators is concentrated in one homological degree. Somewhat surprisingly, it turns out that in the case of operads, this homological purity condition does not hold for some underlying tree monomials. The simplest possible example is obtained as follows. Consider the free non-symmetric operad generated by one binary operation, and consider the set of monomial relations $W$ consisting of all tree monomials of arity four in that operad. Then for the tree monomial
 \[
\vcenter{\hbox{\xymatrix@R=.4pc@C=.2pc{ 
& & & \ar@{-}[dr]&&\ar@{-}[dl] \ar@{-}[dr]& &\ar@{-}[dl] \\
\ar@{-}[dr] & &\ar@{-}[dl]& &*+[o][F-]{}\ar@{-}[dr]& {} &*+[o][F-]{}\ar@{-}[dl]& {} \\
&*+[o][F-]{}\ar@{-}[drr]& & & &*+[o][F-]{}\ar@{-}[dll]  & \\
&  & &*+[o][F-]{}\ar@{-}[dr] & &\ar@{-}[dl] \\
&  & & &*+[o][F-]{}\ar@{-}[d] & \\
& & &*{} & \\
}}}
 \]
of arity seven, as well as the monomials obtained from it by reflections at internal vertices, the corresponding component of the space generators of the minimal model is two-dimensional, with one generator in degree three and one generator in degree four. This example is a particular case of the remarkable construction from the very recent preprint of Qi, Xu, Zhang and Zhao \cite[Sec.~5]{qi2020growth}: it is the ``operadization'' of the algebra $\k\langle x,y\rangle/\mathfrak{m}^3$. We feel that explicitly computing minimal models for monomial operads obtained by that construction is likely to exhibit some of the most interesting phenomena behind minimal models of general monomial operads.

\subsection{Non-multiplicative algebraic structures}

For an algebraic structure that is not multiplicative, our approach does not adapt immediately. Upon examining the existing literature on Gr\"obner--Shirshov bases, see, for example, \cite{MR2386984,MR3277879,MR3328636} and references therein, we feel that the most obvious way to deal with a non-multiplicative algebraic structure is to embed it into a bigger multiplicative one. For example, Lie algebras can be embedded in their universal envelopes, and so one can work with a rewriting system within a multiplicative algebraic structure. Equivalence of the associative algebra methods applied to the universal envelopes and the Lie-algebraic methods of Shirshov was established by Bokut and Malcolmson \cite{MR1733167}. Alternatively, a Lie algebra can be regarded as an anti-commutative magmatic algebra; the corresponding study of Gr\"obner--Shirshov bases was undertaken in \cite{MR2491724}. Recent papers on Gr\"obner--Shirshov bases for pre-Lie algebras \cite{MR2744933}, Novikov algebras \cite{MR3847122}, and Sabinin algebras \cite{MR4150721} follow the same logic. This raises a question of computing not only models of monomial algebras for multiplicative algebraic structures, but also models of meaningful classes of algebras obtained when extending non-multiplicative structures to multiplicative ones. We hope to address this in more detail elsewhere.

\bibliographystyle{plain} 
\bibliography{mc-equations-and-rewriting.bib}

\end{document}